\newcommand{\N}{\ensuremath{\mathbb{N}}}
\newcommand{\R}{\ensuremath{\mathbb{R}}}     
\newcommand{\Z}{\ensuremath{\mathbb{Z}}}    
\renewcommand{\P}{\ensuremath{P}}   
\newtheorem{lemma}{Lemma}
\newtheorem{theo}{Theorem}
\newtheorem{ex}{Example}
\newtheorem{prop}{Proposition}
\begin{document}


\title{Rumor processes on $\N$ and discrete renewal processes}

\author{Sandro Gallo}
\address[Sandro Gallo]{Departamento de M\'etodos Estat\'\i sticos - Instituto de
  Matem\'atica - Universidade Federal do Rio de Janeiro (UFRJ)} 
\email{sandro@im.ufrj.br}

\author{Nancy L. Garcia}
\address[Nancy L. Garcia]{Departamento de Estat\'\i stica - Instituto de Matem\'atica, Estat\'\i stica e Computa\c c\~ao
  Cient\'\i fica - Universidade de
  Campinas (UNICAMP)}
\email{nancy@ime.unicamp.br}

\author{Valdivino Vargas Junior}
\address[Valdivino Vargas Junior]{Instituto de Matem\'atica e Estat\'\i stica - Universidade
  Federal de Goi\'as (UFG)} 
\email{vv.junior@mat.ufg.br}

\author{Pablo M. Rodr\'\i guez}
\address[Pablo M. Rodr\'\i guez]{Instituto de Ci\^encias Matem\'aticas e de Computa\c c\~ao -
  Universidade de S\~ao Paulo (USP)}
\email{pablor@icmc.usp.br}

\date{December 18, 2013}

\begin{abstract}
We study two rumor processes on $\N$, the dynamics of which are related to an SI epidemic model with long range transmission. 
Both models start with one spreader at site $0$ and ignorants at all the other sites of $\N$, but differ by the transmission mechanism. In one model,  the spreaders transmit the information within a random distance on their right, and in the other the ignorants take the information from a spreader within a random distance on their left. 

We obtain the probability of survival, information on the distribution of the range of the rumor and limit theorems for the proportion of spreaders. The key step of our proofs is to show that, in each model, the position of the spreaders on $\N$ can be related to a suitably chosen discrete renewal process.
\end{abstract}

\maketitle

\section{Introduction}

In the last decades many works dealt with the analysis of the phenomenon of information transmission (news or rumors) from a probabilistic point of view. The resulting stochastic models are, in general, inspired in the classical SIR, SIS and SI epidemic models. In these models, it is assumed that an infection or information spreads through a population subdivided into susceptibles, infectives, and removed individuals, who are referred to as ignorants, spreaders, and stiflers when one deal with rumor difusion processes.

In the context of the SIR epidemic model, rumor processes were introduced by \cite{daley/kendall/1965} and by \cite{maki/thompson/1973}. In such models, a finite, closed and homogeneously mixing population is considered. Spreaders try to tell the rumor to ignorants, and stiflers appear either through the meeting of spreaders or through the meeting of a spreader with a stifler. The later transitions represent the loss of interest in propagating the rumor when a spreader meets someone  that already knows the rumor. The well known results for these models are limit theorems for the remaining proportion of ignorants when there are no spreaders left in the population, that is, at the end of the process. Some generalizations of the basic models and recent results can be found, for instance, in \cite{lebensztayn/machado/rodriguez/2011a,lebensztayn/machado/rodriguez/2011b}, \cite{comets/delarue/schott/2013}, and references therein. Variations of the Maki-Thompson rumor model were  considered in several graphs. For instance, the survival of the rumor was studied in \cite{moreno/nekovee/pacheco/2004} and \cite{isham/harden/nekovee/2010} when the population is represented by a random or complex network, and in \cite{coletti/rodriguez/schinazi/2012} when the population is represented by the d-dimensional hypercubic lattice.

When the population is composed only by spreaders and ignorants, the rumor process is called SIS or SI epidemic model. In the former, a spreader may become ignorant, whereas  in the later spreaders remain in such state forever. Recent results for the SIS model, known in the probabilistic literature as the contact process, which can be of interest in the context of information spreading, can be found in \cite{berger/borgs/chayes/saberi/2005} and \cite{durrett/jung/2007}. On the other hand, one of the first SI rumor models was proposed and analyzed, in an homogeneously mixing population, by \cite{pittel/1987}. In this case, the author studied the distribution of the number of stages before everybody is informed by means of an approximation on the number of spreaders at time $t$ by a deterministic equation.\\

The purpose of this paper is to study rumor processes on $\N$, the dynamics of which are related to the process considered by \cite{pittel/1987}, but with long range transmissions. 
More precisely, we consider two long range rumor spreading models, initially introduced by \cite{junior/machado/zuluaga/2011}, called firework and reversed firework processes (FP and RFP in the sequel). Both models start with one spreader at site $0$ and ignorants at all the other sites of $\N$. The difference between them is the transmission mechanism. In the FP, each spreader transmits the information, independently, to the individuals within a random distance on its right. In the RFP, each ignorant takes the information, independently, from a spreader within a random distance on its left. 

\cite{junior/machado/zuluaga/2011} gave sufficient conditions  under which the rumor survives, or not, with positive probability. It is worth noticing that related results have been obtained recently by \cite{bertacchi/zucca/2013} and previously by \cite{athreya/roy/sarkar/2004} (the later in the context of space covering processes). 
In the present paper, we give necessary and sufficient conditions for survival of the rumor.
Our method of proof, based on a direct comparison between the rumor processes and a discrete time renewal process, allows us to obtain several  additional results. For the FP, we obtain the exact expression for the probability of survival. We also obtain information about the distribution of the range of the rumor when it dies out applying results of \cite{garsia/lamperti/1962}, \cite{bressaud/fernandez/galves/1999a} and \cite{gallo/lerasle/takahashi/2013}, all of them concerning renewal theory. For the RFP, we obtain a law of large numbers and a central limit theorem for the proportion of spreaders in a range of size $n$ as $n$ diverges. We point out that this type of results, namely limit theorems for the proportion of individuals of a certain class, has been of interest in many of the papers previously cited. Related models, and results, can be found, for instance, in \cite{kurtz/lebensztayn/leichsenring/machado/2008}, where information transmission is modeled by means of a system of random walks, or \cite{andersson/1998}, where an epidemic model is described in the framework of random graphs.

\section{Models and main results}

In what follows,  ${\bf R}=(R_{i})_{i\ge1}$ will always be  a sequence of $\N$-valued i.i.d. random variables with distribution 
$$P(R_{0}=k)=\lambda_{k}, \,\,\, k=0,1,2,\ldots$$ 
where $\lambda_0\in(0,1)$. For each value of $k\ge0$ let $\alpha_{k}:=P(R_{0}\leq k)$. 
%

\subsection{The Firework process}

Suppose that one individual is disposed at each site of $\N=\{0,1,\ldots\}$.  In this model, the spreaders transmit the information within a random distance to their right. For any $n\ge0$, let $A_n$ represents the set of individuals that have been informed at stage $n$ in the Firework process. Initially, only $0$ is a spreader, and thus $A_0=\{0\}$. Then, the sequence  $(A_n)_{n\ge1}$ is defined recursively through
\[A_n:=\{i\in\N:\,\,\textrm{there exists} \,\,j\in A_{n-1}\,\,\textrm{such that}\,\,i\in\{j,\ldots,j+R_j\}\}\setminus A_{n-1}.\]
In words, an individual is newly informed at stage $n$ if it was an ignorant at stage $n-1$, and if it was within the radius of transmission of a spreader on its left. Once informed, an ignorant becomes a spreader and remains a spreader forever.
Then $\cup_{i\ge0}A_i$ is the set of spreaders (or informed individuals) at the end of the spreading procedure (stage $\infty$). Let $M:=|\cup_{i\ge0}A_i|$ be the final number of spreaders. Note that, in this case, $M$ coincides with the final range of the rumor. The event ``the rumor survives'' writes as
\[
\mathcal{A}:=\{M=\infty\}.
\]

Our first main result gives the exact probability of survival of the rumor.

\begin{theo}\label{theo:firework}
$$P(\mathcal{A})=\left(1+\sum_{j\ge1}\prod_{i=0}^{j-1}\alpha_i\right)^{-1}.\\$$
\end{theo}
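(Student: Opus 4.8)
The plan is to show that the positions of the spreaders in the Firework process form a discrete renewal process, and then to read off the survival probability from the classical dichotomy for renewal processes. Write the set of spreaders at the end of the process as $0 = S_0 < S_1 < S_2 < \cdots$ (possibly a finite sequence). The crucial observation is that site $S_{k}+1$ is \emph{not} reached by any spreader to its left precisely when every individual $j$ in the already-informed block fails to cover it; since the informed sites up to $S_k$ are a deterministic function of $R_0,\dots,R_{S_k}$ and the relevant reaching events depend only on those radii, the gaps $T_k := S_k - S_{k-1}$ should turn out to be i.i.d. This is the step I expect to require the most care: one must argue that, conditionally on the process having reached $S_k$, the "future" radii $R_{S_k+1}, R_{S_k+2}, \dots$ are fresh i.i.d. copies, and that the rule determining the next spreader position uses only these. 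Concretely, I would prove by induction on $k$ that, on the event $\{S_k < \infty\}$, the random variable $T_{k+1}$ is independent of $\mathcal{F}_k := \sigma(R_0,\dots,R_{S_k})$ with a distribution not depending on $k$.

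Next I would identify that common gap distribution. The rumor advances past site $0$ exactly as far as the longest reach among the contiguously informed sites, but the clean way to compute the law is via the complementary event: starting from a spreader at position $0$ with ignorants to the right, the first $m$ sites $1,\dots,m$ all remain ignorant forever iff $R_0 \le m$ \emph{and}, inductively, none of $1,\dots,m$ is ever reached — but if $1,\dots,m$ are never informed, then only site $0$ can reach into them, so the condition is simply $R_0 \le m$. Hence $P(T_1 > m) = P(R_0 \le m) = \alpha_m$ for $m \ge 0$, equivalently $P(T_1 = m) = \alpha_{m-1} - \alpha_{m-2}$ for $m \ge 1$ (with $\alpha_{-1} := 0$), and the renewal is \emph{transient} (defective) exactly when $P(T_1 = \infty) = \lim_{m} \alpha_m \cdot$ — more precisely $P(T_1 = \infty) = 1 - \lim_{m\to\infty}(1-\alpha_m)\cdots$; I should instead track the honest computation $P(T_1 < \infty) = \sum_{m \ge 1}(\alpha_{m-1}-\alpha_{m-2})$, which telescopes, so that $P(T_1 = \infty) = 1 - \lim_{m \to \infty} \alpha_{m-1}$ up to the tail correction coming from the requirement that some site beyond be reached.

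Given the renewal structure, the event $\mathcal{A} = \{M = \infty\}$ is exactly the event that the renewal process has infinitely many renewals, i.e. that $T_k < \infty$ for all $k$. If $q := P(T_1 = \infty) > 0$ then $P(\mathcal{A}) = 0$; if $q = 0$ the chain is recurrent and we need the finer count. Here I would instead use the Green's-function / generating-function identity: letting $u_n := P(n \text{ is a spreader})$ and $f_n := P(T_1 = n)$, one has $\sum_n u_n x^n = (1 - \sum_n f_n x^n)^{-1}$, and evaluating the renewal mass at the origin of the \emph{defective} chain gives $P(\mathcal{A}) = P(\text{infinitely many renewals})$. The standard formula is $P(\mathcal{A}) = 1 - \sum_{n\ge 1} f_n = P(T_1 = \infty)$? — no: for a possibly-defective renewal the probability of infinitely many renewals is $\prod$-type only in the Bernoulli case; in general $P(\text{survival}) = \lim_k (1-q)^{k}$ which is $0$ unless $q = 0$. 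So the nontrivial content of the theorem is the computation of the full series $\sum_{j \ge 1}\prod_{i=0}^{j-1}\alpha_i$, which must arise as $\E[\,\#\{n : n \text{ is a spreader}\}\,]^{-1}$ via the elementary renewal identity $P(\mathcal{A}) = \left(\E N\right)^{-1}$ when the number of renewals $N$ before extinction is geometric. Thus the endgame is: show $N$ is geometric with $P(N > j) = \prod_{i=0}^{j-1}\alpha_i$ — this is immediate from the gap computation $P(T_1 > m) = \alpha_m$ once one checks that the $j$-th spreader exists iff $j$ independent "escape" attempts all succeed, where attempt $i$ succeeds with the \emph{conditional} probability built from the $\alpha_i$'s — and then sum the geometric series. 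I would double-check the precise form of these conditional probabilities against $\prod_{i=0}^{j-1}\alpha_i$, as getting the indexing right is the delicate bookkeeping of the argument.
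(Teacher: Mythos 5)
Your proposal rests on a renewal structure that the Firework process does not have. In the FP every spreader $j$ informs the full block $\{j,\ldots,j+R_j\}$, so the final set of spreaders is the interval $\{0,1,\ldots,M-1\}$; the ``positions of the spreaders'' $0=S_0<S_1<\cdots$ are just consecutive integers, every gap $T_k$ equals $1$, and there is no nontrivial renewal process of spreader positions to extract. (That picture is exactly right for the \emph{Reverse} Firework process --- it is Lemma \ref{lemma:key2} of the paper --- but not here.) The concrete laws you then assert are false. For the gap you claim $P(T_1>m)=\alpha_m$, whereas in the FP the next spreader after $0$ is site $1$ if $R_0\ge1$ and does not exist otherwise, so $P(T_1>m)=P(T_1=\infty)=\alpha_0$ for every $m\ge1$. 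For the number of spreaders you claim $P(N>j)=\prod_{i=0}^{j-1}\alpha_i$, but the number of spreaders exceeds $1$ iff site $1$ is informed iff $R_0\ge 1$, which has probability $1-\alpha_0\neq\alpha_0$ in general; the product $\prod_{i=0}^{j-1}\alpha_i$ is the tail of the \emph{inter-arrival} distribution of an auxiliary renewal process, not the tail of $M$. Finally, the closing step ``$P(\mathcal{A})=(\E N)^{-1}$ by the elementary renewal identity'' is not a theorem: for a defective renewal process the probability of infinitely many renewals is $0$ and for a proper one it is $1$, so no identity of that form can produce the nondegenerate value $\mu^{-1}$.

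What actually carries the proof is the identity $P(M>n)=u_{n+1}$, where $(u_n)$ is the renewal sequence of the renewal process with inter-arrival tail $P(T>j)=\prod_{i=0}^{j-1}\alpha_i$; the theorem then follows by letting $n\to\infty$ and invoking the renewal theorem $u_n\to\mu^{-1}$. Establishing that identity is the real work, and the paper does it not by locating renewals among the spreaders but by reversing the index set and coupling a family of ``house of cards'' Markov chains $H^{(m)}$ with transitions $Q(i,i+1)=\alpha_i$ and $Q(i,0)=1-\alpha_i$, whose visits to $0$ are the renewals; the monotonicity and coalescence-at-$0$ of this coupled family convert the event $\{M>n\}$ into the single event $\{H^{(-n-1)}_{0}=0\}$, of probability $u_{n+1}$. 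Neither this mechanism nor any substitute for it appears in your proposal, so the link between the survival event and the series $1+\sum_{j\ge1}\prod_{i=0}^{j-1}\alpha_i$ is never established, even though you correctly guessed the form of the final answer.
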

As a direct corollary, we see that survival occurs with positive probability if, and only if, $\sum_{j\ge1}\prod_{i=0}^{j-1}\alpha_i<\infty$.
An important issue that, as far as we know, has not been addressed in previous works concerning this model, is bounds to the tail distribution of the final range of the rumor. This is the object of the following results. 

\begin{prop}\label{fire:Mdistr}
The random variable $M$ has finite expectation when $\prod_{k\ge0}\alpha_{k}>0$, and has exponential tail distribution when $\alpha_{k}$ increases exponentially fast to $1$.
\end{prop}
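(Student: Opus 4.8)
The plan is to exploit the renewal structure behind the firework process---the same one that yields Theorem~\ref{theo:firework}. Since a spreader at site $j$ informs the whole block $\{j,\dots,j+R_j\}$, an easy induction shows that the set of informed individuals is, at every stage, an interval $\{0,1,\dots,k\}$, and that the rumor gets stuck at such an interval exactly when no spreader in it reaches beyond $k$, i.e.\ when $R_j\le k-j$ for all $0\le j\le k$. Call an integer $n\ge 1$ a \emph{barrier} if $R_j\le n-1-j$ for all $0\le j\le n-1$, and declare $0$ a barrier as well; then $M$, being one more than the first $k$ at which the rumor gets stuck, equals the first positive barrier, and $\{M>n\}$ is precisely the event that there is no barrier in $\{1,\dots,n\}$.

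First I would establish the renewal property: the barriers $0=\sigma_0<\sigma_1<\sigma_2<\cdots$ form a (possibly defective) renewal process whose inter-arrival law is the law of $M$. The point is that $\{n\text{ is a barrier}\}$ is measurable with respect to $R_0,\dots,R_{n-1}$, and that, on this event, ``$n+m$ is a barrier'' is equivalent to ``$m$ is a barrier for the shifted sequence $(R_{n+i})_{i\ge0}$'' (the constraints indexed by $j<n$ being then automatic). Since $\sigma_1=M$ is a stopping time for $(R_j)_{j\ge0}$, the strong Markov property for i.i.d.\ sequences gives that the gaps $\sigma_{i+1}-\sigma_i$ are i.i.d.\ copies of $M$. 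Moreover the renewal mass function is explicit: by independence of the $R_j$,
\[
u_n:=\Prob(n\text{ is a barrier})=\prod_{j=0}^{n-1}\Prob(R_0\le n-1-j)=\prod_{i=0}^{n-1}\alpha_i ,
\]
a non-increasing sequence with limit $c_\infty:=\prod_{i\ge0}\alpha_i$ (incidentally $\sum_{n\ge0}u_n=\Prob(\mathcal{A})^{-1}$, which recovers Theorem~\ref{theo:firework}).

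For the first claim, suppose $\prod_{k\ge0}\alpha_k>0$. Every term $\prod_{i=0}^{j-1}\alpha_i$ is then $\ge c_\infty>0$, so $\sum_{j\ge1}\prod_{i=0}^{j-1}\alpha_i=\infty$ and, by the corollary to Theorem~\ref{theo:firework}, $M<\infty$ almost surely; the barrier renewal process is thus non-defective and, since $\Prob(M=1)=\lambda_0>0$, aperiodic. By the (Erd\H{o}s--Feller--Pollard) renewal theorem $u_n\to 1/\E[M]$, while $u_n\to c_\infty>0$, and comparing the two limits forces $\E[M]=c_\infty^{-1}=\bigl(\prod_{k\ge0}\alpha_k\bigr)^{-1}<\infty$. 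For the second claim, assume $1-\alpha_k\le C\gamma^k$ for some $C>0$ and $\gamma\in(0,1)$ (``$\alpha_k$ increases exponentially fast to $1$''). Then $\sum_{i\ge n}(1-\alpha_i)=O(\gamma^n)$, whence
\[
0\le u_n-c_\infty=c_\infty\Bigl(\prod_{i\ge n}\alpha_i^{-1}-1\Bigr)=O(\gamma^n),
\]
so $u_n$ converges to $c_\infty>0$ at a geometric rate. I would then invoke the renewal-theoretic results of \cite{garsia/lamperti/1962}, \cite{bressaud/fernandez/galves/1999a} and \cite{gallo/lerasle/takahashi/2013}, which convert geometric convergence of a renewal sequence to its limit into a geometric tail for its inter-arrival law, giving $\Prob(M>n)\le C'\beta^n$ for some $C'>0$ and $\beta\in(0,1)$. (Alternatively, one argues directly from $\sum_{n\ge0}\Prob(M>n)s^n=\bigl[(1-s)\sum_{n\ge0}u_ns^n\bigr]^{-1}$: the geometric decay of $u_n-c_\infty$ makes $(1-s)\sum_{n\ge0}u_ns^n=c_\infty+(1-s)\sum_{n\ge0}(u_n-c_\infty)s^n$ analytic and non-vanishing on a disc of radius $>1$, so the left-hand side extends analytically past $|s|=1$.)

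The hard part is the rigorous set-up of the renewal structure---showing that the gaps between consecutive barriers are genuinely i.i.d.\ with the law of $M$---which rests on the two observations above (that ``$n$ is a barrier'' is $\sigma(R_0,\dots,R_{n-1})$-measurable, and that on this event the later barrier conditions involve only the post-$n$ variables) together with the strong Markov property. Everything after that is classical renewal theory: the renewal theorem for the first assertion, and Kendall-type rate-of-convergence results for the second. A minor loose end is pinning down which precise statement among the three references delivers the implication ``geometric rate for $u_n$ $\Rightarrow$ geometric tail for the renewal law'' in exactly the form needed here.
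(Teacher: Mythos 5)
Your proof is correct, but it runs along a genuinely different (in fact, dual) renewal structure from the paper's. The paper proves this proposition via Lemma~\ref{lemma:1}, which identifies $P(M>n)$ with $u_{n+1}$, the renewal sequence of the process with inter-arrival law $q_k=(1-\alpha_{k-1})\prod_{i=0}^{k-2}\alpha_i$ (equivalently, the return probabilities of the house-of-cards chain $H$), and then quotes Proposition~\ref{prop:hoc} of Bressaud--Fern\'andez--Galves: $u_k\to 0$ at a summable rate when $\prod_i\alpha_i>0$, and exponentially when $1-\alpha_k$ decays exponentially, whence $\E[M]=\sum_k u_k<\infty$ and the exponential tail. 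You instead regenerate at the ``barriers'' (cut points), obtaining a renewal process whose inter-arrival law is that of $M$ itself and whose renewal mass function is the explicit product $\prod_{i=0}^{n-1}\alpha_i$; note this is \emph{not} the paper's $u_n$ (your sequence tends to $\prod_i\alpha_i>0$, the paper's tends to $\mu^{-1}=0$) --- the two processes are dual, one having defect $\prod_i\alpha_i$ and mean $\mu$, the other defect $\mu^{-1}$ and mean $(\prod_i\alpha_i)^{-1}$. Your route buys more than the statement asks: the Erd\H{o}s--Feller--Pollard theorem gives the exact value $\E[M]=(\prod_k\alpha_k)^{-1}$ rather than mere finiteness, and the whole argument is elementary and self-contained, whereas the paper's route is the one that scales to the finer sub-exponential estimates of Proposition~\ref{coro1}, which your barrier process cannot deliver since its renewal sequence does not vanish. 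One caveat on sourcing: the three references you name (and which the paper uses) all go in the direction ``inter-arrival tail $\Rightarrow$ rate for the renewal sequence,'' not the converse you need; but this does not matter, because your parenthetical generating-function argument --- $\sum_n P(M>n)s^n=\bigl[(1-s)\sum_n u_ns^n\bigr]^{-1}$ extends analytically past $|s|=1$ once $u_n-c_\infty=O(\gamma^n)$ and the denominator is non-vanishing on the closed unit disc (aperiodicity from $\lambda_0>0$) --- is complete and closes the claim without any citation.
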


Under more specific assumptions, we can obtain more precise information on the tail distribution.

\begin{prop}\label{coro1}
 We have the following explicit bounds for the tail distributions.
\begin{enumerate}[(i)]
\item  If $1-\alpha_{k}\leq C_{r}r^{k},\,k\ge1$, for some $r\in(0,1)$ and a constant $C_{r}\in(0,\log \frac{1}{r})$ then
\[
P(M\ge k)\leq \frac1{C_{r}}(e^{C_{r}}r)^{k}.
\]
\item If $1-\alpha_k\sim(\log k)^{\beta}k^{-\alpha}$, $\beta\in \R$, $\alpha>1$, then there exists  $C>0$ such that, for large $k$'s, we have
$$P(M\ge k)\leq C(\log k)^{\beta}k^{-\alpha}.$$
\item If $1-\alpha_{k}= \frac{r}{k},\,k\ge1$ where $r\in(0,1)$, there exists  $C>0$ such that, for large $k$, we have
\[
P(M\ge k)\leq C\frac{(\ln k)^{3+r}}{(k)^{2-(1+r)^{2}}}.
\]
\item If $\alpha_k\sim ((k+1)/(k+2))^{\alpha}$,  $\alpha\in(1/2,1)$, then there exists $C=C(\alpha)>0$ such that, for large $k$, we have
\[
P(M\ge k)\leq \frac{C}{k^{1-\alpha}}.
\]
\end{enumerate}
\end{prop}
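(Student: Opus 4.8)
The plan is to read the proposition off the renewal description of $M$ underlying Theorem~\ref{theo:firework} and Proposition~\ref{fire:Mdistr}. Recall that description: if $(u_n)_{n\ge 0}$ denotes the renewal function of the aperiodic discrete renewal process whose inter-arrival time $\xi$ satisfies
\[
P(\xi>n)=\prod_{i=0}^{n-1}\alpha_i,
\qquad\text{i.e.}\qquad
P(\xi=n)=(1-\alpha_{n-1})\prod_{i=0}^{n-2}\alpha_i\ \ (n\ge 1),
\]
then $P(M>n)=u_n$ for every $n\ge 0$, and hence $P(M\ge k)=u_{k-1}$. Thus each item reduces to: (a) evaluating the increment tail $P(\xi>n)=\prod_{i=0}^{n-1}\alpha_i$ (and, when convenient, its mass function) under the stated hypothesis on $(\alpha_k)$; and (b) feeding this into the appropriate renewal estimate for $u_n$.

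\emph{Step (a): the increment.} In (i) and (ii) one has $\sum_i(1-\alpha_i)<\infty$, so $p:=\prod_{i\ge 0}\alpha_i>0$ and $\xi$ is defective with $P(\xi=\infty)=p$; moreover $P(\xi=n,\,\xi<\infty)\le C_r r^{n-1}$ in (i), while $P(\xi=n)\sim p\,(\log n)^{\beta}n^{-\alpha}$ in (ii). In (iii), $\alpha_i=(i-r)/i$ gives $\prod_{i=1}^{n-1}\alpha_i=\Gamma(n-r)/(\Gamma(1-r)\Gamma(n))\sim \Gamma(1-r)^{-1}\,n^{-r}$, so $\xi$ is proper with a regularly varying tail of index $-r$, $r\in(0,1)$. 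In (iv), $1-\alpha_k\asymp 1/k$ makes $P(\xi>n)$ regularly varying of index $-\alpha$, $\alpha\in(1/2,1)$.

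\emph{Step (b): the renewal estimate.} In (i): since $P(\xi=n,\xi<\infty)\le C_r r^{n-1}$ and $C_r<\log\tfrac1r$ (so that $e^{C_r}r<1$), an exponential-moment (Chernoff) bound on the renewal sequence gives $u_n\le \tfrac1{C_r}(e^{C_r}r)^n$. In (ii): the transient-renewal equivalence $u_n\sim P(\xi=n)/p^2$, valid for subexponential ($\log$-polynomial) increments, gives $u_n\le C(\log n)^{\beta}n^{-\alpha}$. In (iv): as $\alpha\in(1/2,1)$, the discrete renewal theorem with infinite mean of \cite{garsia/lamperti/1962} applies and yields $u_n\sim c'\,n^{\alpha-1}$, hence $u_n\le C\,n^{-(1-\alpha)}$. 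In (iii): here $r$ may be $\le\tfrac12$, outside the range where \cite{garsia/lamperti/1962} gives sharp asymptotics, and one uses instead the robust but coarser bounds for renewal sequences with polynomially decaying increments from \cite{bressaud/fernandez/galves/1999a} and \cite{gallo/lerasle/takahashi/2013}, which convert the input rate $r$ (and an input power of $\log$) into the claimed output rate $2-(1+r)^2$ and the factor $(\log n)^{3+r}$. In each case the stated bound follows on substituting $n=k-1$ and absorbing constants.

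The main obstacle is case (iii): one has to verify that $\prod_{i=1}^{n-1}\alpha_i$ meets exactly the hypotheses of \cite{bressaud/fernandez/galves/1999a,gallo/lerasle/takahashi/2013} (in particular in the regime $r\le\tfrac12$), and then to follow their bootstrap carefully enough to see the exponent $2-(1+r)^2$ and the power $(\log n)^{3+r}$ emerge with the right constants. Once the identity $P(M>n)=u_n$ is in hand, cases (i), (ii) and (iv) are comparatively routine.
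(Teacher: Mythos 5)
Your route is the same as the paper's: Lemma~\ref{lemma:1} identifies the tail of $M$ with the renewal sequence $u_n$ of the renewal process whose inter-arrival tail is $\prod_{i=0}^{n-1}\alpha_i$, and then each item is read off from the corresponding estimate on $u_n$ (Proposition~\ref{prop:expli}, i.e.\ the results of Garsia--Lamperti, Bressaud--Fern\'andez--Galves and Gallo--Lerasle--Takahashi that you cite). Your identification of the inter-arrival law, the defectiveness in (i)--(ii), the regular variation of index $-r$ in (iii) via the Gamma-function computation, and the applicability of Garsia--Lamperti only for $\alpha\in(1/2,1)$ in (iv) all match the paper.

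The one concrete error is the index in the renewal identity. The paper proves $P(M>n)=u_{n+1}$, hence $P(M\ge k)=u_k$, not $u_{k-1}$ as you state: check $n=0$, where $P(M>0)=P(R_0\ge 1)=1-\alpha_0=u_1$, whereas $u_0=1$. For items (ii)--(iv) the shift is harmless (absorbed into $C$), but in item (i) the constant is explicit and your version only yields $u_{k-1}\le \frac{1}{C_r}(e^{C_r}r)^{k-1}$, which is \emph{strictly weaker} than the claimed $\frac{1}{C_r}(e^{C_r}r)^{k}$ because $e^{C_r}r<1$; with the correct identity $P(M\ge k)=u_k$ the stated bound follows verbatim from Proposition~\ref{prop:expli}(i). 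Aside from that, your caveat on (iii) (verifying the hypotheses of the cited bounds) is fair but puts you in the same position as the paper, which also invokes those results without reproving them.
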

 
As examples, consider the following interesting variants of the model.

\begin{ex}\label{ex:geometric}
 Instead of having exactly one individual at each site of $\N$, suppose that there is an individual at each site with probability $\epsilon\in[0,1]$ independently of the other sites. 
 The FP considered in \cite{junior/machado/zuluaga/2011}
corresponds to the particular case where $\epsilon=1$. When $\epsilon < 1$, we obtain a  rumor process
in which the individuals are located at random positions and can be
arbitrarily far away to any other individual. In this sense,
$\epsilon$ is a ``sparseness'' parameter. This is a special case which is also studied by \cite[see Proposition 3.1 therein]{athreya/roy/sarkar/2004}.
 
In this variant, let ${\bf \bar{R}}=(\bar{R}_{i})_{i\ge1}$ be the  i.i.d. sequence of random radius, and let $P(\bar{R}_{0}=k)=\bar{\lambda}_{k}$ and $\bar{\alpha}_{k}:=P(\bar{R}_{0}\leq k)$.  
For any $i\ge0$, let $S_{i}$ be the Bernoulli random variable with parameter $\epsilon$ that indicates whether or not there is an individual at site $i$.
Let us define
\[
R_{i}:=\bar{R_{i}}.{\bf 1}\{S_{i} = 1\}
\]
which is the ``effective'' spreading radius  of site $i$: if there is nobody at site $i$ 
then the radius is $0$, because no sites on the right of $i$ are influenced by $i$. Otherwise, the radius is $\bar{R}_i$.
Thus, all the results stated above hold using
$$\alpha_k=P(R_0\leq k)= 1 - \epsilon ( 1 - \bar{\alpha}_k).$$
\end{ex}

\begin{ex}\label{ex:zucca}
Using different techniques \cite{bertacchi/zucca/2013} studied the following rumor
processes in random environment.  Consider
$(X_{n})_{n\geq0}$ a sequence of $\N$-valued i.i.d. random
variables and $(\bar{R}_{n}^{i})_{i\geq1,n \ge 1}$
a collection of i.i.d. random radius with $\bar{\alpha}_k = P(\bar{R}_n^i \leq k)$. At each site $n$, we have $X_n$ individuals, and each individual $i$ has a particular radius of spread $\bar R_{n}^{i}$. 

In order to apply our results to this model, 
let us define
\begin{equation}\label{eq:R}
R_n:=\sup_{i=1,\ldots,X_n}\bar{R}_{n}^i.
\end{equation}
We have $P(R_0\leq k)$ equals
\[
P\left(\sup_{i=1,\ldots,X_0}\bar{R}_{0}^i\leq k\right)=\sum_{l\ge1}P\left(\sup_{i=1,\ldots,l}\bar{R}_{0}^i\leq k\,,\,\,X_0=l\right)=\sum_{l\ge1}P(X_0=l)\bar{\alpha}_k^l.
\]
where the last equality follows from the independence among all the r.v.'s involved. Our results hold using
\[
\alpha_k=g_{X_0}(\bar{\alpha}_k)
\]
where $g_{X_0}(\cdot)$ is the probability generating function of $X_0$. In particular, Theorem 3.1 of \cite{bertacchi/zucca/2013} is a consequence of our Theorem \ref{theo:firework}. We further  obtain 
tail decays for the size of the set of spreaders. 
\end{ex}

\subsection{The Reverse Firework process}\label{sec:rfp}

Similarly to the previous section, we suppose that there is one individual at each site of $\N=\{0,1,\ldots\}$. In this model, the ignorant individuals take the information of a spreader within a random distance on its left. We will now let $B_n$, $n\ge0$ represents the set of  individuals that have been informed at stage $n$ in the Reverse Firework process. This sequence is also defined recursively through $B_0=\{0\}$ and, for $n\ge1$ \[B_n:=\{i\in\N:\,\,\textrm{there exists} \,\,j\in B_{n-1}\,\,\textrm{such that}\,\,j\in\{i-R_i,\ldots,i\}\}\setminus B_{n-1}.\]
In words, an individual is newly informed at stage $n$ if it was an ignorant at stage $n-1$, and if its radius covers a spreader on its left. Once informed, an ignorant becomes a spreader and remains in that state forever. Then $\cup_{i\ge0}B_i$ is the set of spreaders at the end of the spreading procedure (stage $\infty$) and we denote by $N$ its cardinality. As in the FP we define the  event ``the rumor survives'' 
by
\[
\mathcal{B}:=\{N=\infty\}.
\]

For this model, we obtain necessary and sufficient conditions for survival of the rumor as well as the  distribution of $N$ when the rumor dies out.

\begin{theo}\label{theo:revfirework}There exist two situations.
\begin{itemize}
\item If $\prod_{k\geq0}\alpha_{k} = 0$, then $P(\mathcal{B})=1$.
\item If $\prod_{k\geq0}\alpha_{k} >0$, then $P(\mathcal{B})=0$ and $N\sim\textrm{\emph{Geom}}\left(\prod_{k\geq0}\alpha_{k}\right)$.
\end{itemize}
\end{theo}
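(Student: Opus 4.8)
The plan is to realise the positions $0=V_0<V_1<V_2<\cdots$ of the eventually-informed sites in the RFP as a (possibly terminating) discrete renewal process on $\N$, and then to read off both assertions. Write $\theta:=\prod_{k\ge0}\alpha_k$, so that $N=|\{k:V_k<\infty\}|$ and $\mathcal B=\{N=\infty\}$.

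I would start from the following local description of the informed set, valid because an informed site stays informed forever: a site $i\ge1$ is eventually informed if and only if $R_i\ge i-j$ for some eventually-informed $j<i$, and since the closest such $j$ gives the weakest constraint this is equivalent to $R_i\ge i-L_i$, where $L_i:=\max\{j\in\{0,\dots,i-1\}:j\text{ eventually informed}\}$ (well defined, as $0$ is always informed). Writing it this way is useful because $L_i$ depends only on $R_1,\dots,R_{i-1}$ while $R_i$ is independent of these, so conditionally on the informed sites in $\{0,\dots,i-1\}$ site $i$ is informed with probability $1-\alpha_{\,i-L_i-1}$. From this the law of the first gap $V_1$ falls out: on $\{V_1>d\}$ none of $1,\dots,d$ is informed, hence $L_j=0$ for all $j\le d$ and, unwinding the recursion left to right, $\{V_1>d\}=\bigcap_{j=1}^d\{R_j\le j-1\}$, so $P(V_1>d)=\prod_{i=0}^{d-1}\alpha_i$; consequently $P(V_1=d)=(\prod_{i=0}^{d-2}\alpha_i)(1-\alpha_{d-1})$ for $d\ge1$ and $P(V_1=\infty)=\theta$.

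The heart of the argument is the renewal (restart) property: conditionally on $\{V_1=v<\infty\}$, the gap sequence $(V_2-V_1,V_3-V_2,\dots)$ should have the law of $(V_1,V_2-V_1,\dots)$ for an independent fresh RFP started at $0$, and iterating this yields that the gaps $\tau_k:=V_k-V_{k-1}$ are i.i.d.\ copies of $V_1$. The mechanism — and this is precisely where the \emph{reversed} direction of the dynamics is essential — is that any site $w>v$ whose radius reaches some eventually-informed site $j\le v-1$ automatically reaches $v$, since $w-j\ge w-v+1>w-v$ and $v$ itself is informed; hence the informed status of every $w>v$ is determined by the informed sites in $\{v,v+1,\dots\}$ together with $R_{v+1},R_{v+2},\dots$, and the latter are i.i.d.\ copies of $R_0$ independent of $(R_1,\dots,R_v)$, hence of $V_1$.

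To finish I would note that $N=\min\{k\ge1:\tau_k=\infty\}$: if $\theta=0$ then every $\tau_k$ is a.s.\ finite, so $N=\infty$ a.s.\ and $P(\mathcal B)=1$; if $\theta>0$ then $P(N=n)=(1-\theta)^{n-1}\theta$ for $n\ge1$, so $N$ is geometric with parameter $\theta=\prod_{k\ge0}\alpha_k$ and $P(\mathcal B)=P(N=\infty)=0$. The step I expect to be the main obstacle is making the restart property rigorous: one must argue, via a monotonicity/least-fixed-point argument on the recursive definition of $(B_n)_{n\ge0}$, that the eventually-informed set restricted to $\{v,v+1,\dots\}$ coincides exactly with the eventually-informed set of the RFP launched from $v$ — the inequality $w-j>w-v$ is the key point, but it has to be combined with the fact that $\bigcup_n B_n$ is the minimal solution of the update rule, so that informed sites lying to the left of $v$ contribute nothing new to the right of $v$.
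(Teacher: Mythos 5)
Your proposal is correct and follows essentially the same route as the paper: the paper's Lemma \ref{lemma:key2} shows that the indicator process of eventually-informed sites is a renewal process with inter-arrival law $q_k=(1-\alpha_{k-1})\prod_{i=0}^{k-2}\alpha_i$ and defect $q_\infty=\prod_{k\ge0}\alpha_k$, exactly the structure you establish via the gap variables $V_k-V_{k-1}$ and the restart property, and the geometric law of $N$ is then read off identically. Your key observation — that only the nearest informed site to the left matters because the radius is attached to the receiving site — is precisely the content of the paper's computation of $\mathbb{P}(\zeta_n=1\mid\zeta_0^{n-1})$.
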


For any $n\ge1$, let $\zeta_n:={\bf1}\{n\in \cup_iB_i\}$, indicating whether the individual at site $n$ is a spreader or not at the end of the procedure. Let also $N(n):=\sum_{i=1}^{n}\zeta_i$ denotes the number of spreaders in $\{1,\ldots,n\}$.
We will now state limit theorems for the proportion of spreaders within $\{1,\ldots,n\}$, $N(n)/n$, when $n$ diverges.


Let
\begin{equation}
\label{eq:mu1}
\mu:=1+\sum_{j\ge1}\prod_{i=0}^{j-1}\alpha_i\,\,\,\,\,\,\,\textrm{and }\,\,\,\,\,\,
\sigma^2:= \sum_{k\ge1}k^2(1-\alpha_{k-1})\prod_{i=0}^{k-2}\alpha_i-\mu^2.
\end{equation}

\begin{theo}\label{theo:Nn}If $\mu<\infty$ then 
\[
\frac{N(n)}{n}\stackrel{a.s.}{\longrightarrow}\mu^{-1},
\]
and thus $\mu^{-1}$ is the final proportion of spreaders.
Moreover, if $\sigma^2 \in(0,\infty)$,
then
\[
\sqrt{n}\left(\frac{N(n)}{n}-\mu^{-1}\right)\stackrel{\mathcal{D}}{\rightarrow}\mathcal{N}\left(0,\frac{\sigma^2}{\mu^3}\right).
\]
 Otherwise, $N(n)/n \rightarrow 0$. 
\end{theo}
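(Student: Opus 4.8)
The plan is to reduce the statement to the classical strong law of large numbers and central limit theorem for discrete renewal counting processes, once the set of spreaders of the RFP has been identified with the range of a suitable renewal process.

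\emph{The renewal backbone.} First I would observe that in the RFP the final status of a site is independent of the order in which sites get informed: $\zeta_n=1$ if and only if there is an increasing chain $0=j_0<j_1<\dots<j_m=n$ with $j_\ell-j_{\ell-1}\le R_{j_\ell}$ for all $\ell$, and this holds if and only if $R_n\ge L_n$, where $L_n:=n-\max\{j<n:\zeta_j=1\}$ is the distance from $n$ to the nearest spreader strictly to its left. Consequently the ``distance to the last spreader'' process, defined by $D_0:=0$ and, for $n\ge1$,
\[
D_n:=\begin{cases}0,& R_n\ge D_{n-1}+1,\\ D_{n-1}+1,& \text{otherwise,}\end{cases}
\]
is, since the $R_n$ are i.i.d., a Markov chain on $\N$ that from state $d$ moves to $0$ with probability $1-\alpha_d$ and to $d+1$ with probability $\alpha_d$, and its zero set $\{n\ge0:D_n=0\}$ is exactly $\cup_iB_i$. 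This is the ``key step'' announced in the abstract, and I would prove it by the elementary chain argument just sketched.

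\emph{The gap distribution.} Let $0=T_0<T_1<T_2<\dots$ be the successive visits of $(D_n)$ to $0$ and put $G_k:=T_k-T_{k-1}$. By the strong Markov property at the $T_k$, the $G_k$ are i.i.d., and running the chain from $0$ until its first return gives
\[
\Prob(G_1=j)=(1-\alpha_{j-1})\prod_{i=0}^{j-2}\alpha_i\ \ (j\ge1),\qquad \Prob(G_1=\infty)=\prod_{i\ge0}\alpha_i .
\]
From $\Prob(G_1>j)=\prod_{i=0}^{j-1}\alpha_i$ one obtains $\E[G_1]=\sum_{j\ge0}\prod_{i=0}^{j-1}\alpha_i=\mu$ and $\E[G_1^2]=\sum_{k\ge1}k^2(1-\alpha_{k-1})\prod_{i=0}^{k-2}\alpha_i$, hence $\mathrm{Var}(G_1)=\sigma^2$ with $\mu,\sigma^2$ as in \eqref{eq:mu1}; note that $\mu<\infty$ forces $\prod_{i\ge0}\alpha_i=0$, so on $\{\mu<\infty\}$ the renewal process is proper. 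Finally, $N(n)=\sum_{i=1}^n\zeta_i=\#\{k\ge1:T_k\le n\}$ is precisely the renewal counting function of the i.i.d.\ steps $G_1,G_2,\dots$.

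\emph{The limit theorems.} When $\mu<\infty$, the renewal SLLN gives $N(n)/n\to1/\E[G_1]=\mu^{-1}$ almost surely; when $\mu=\infty$ it gives $N(n)/n\to0$, which simultaneously covers the case $\prod_i\alpha_i=0$ (a proper, infinite-mean renewal) and the case $\prod_i\alpha_i>0$, where by Theorem~\ref{theo:revfirework} there are only finitely many renewals and $N(n)$ is eventually constant. When additionally $\sigma^2\in(0,\infty)$ — which already entails $\mu<\infty$ — the renewal central limit theorem (equivalently, Anscombe's theorem applied to $T_{N(n)}=G_1+\dots+G_{N(n)}$, together with $T_{N(n)}\le n<T_{N(n)+1}$) yields $\sqrt n\,(N(n)/n-\mu^{-1})\stackrel{\mathcal D}{\longrightarrow}\mathcal N(0,\sigma^2/\mu^3)$. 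The step I expect to be the main obstacle is the renewal backbone itself: rigorously justifying the order-independence of the informing dynamics and that the excursions of $(D_n)$ away from $0$ are genuinely i.i.d.\ with the stated (defective) law; granted that, the remaining work is a short computation and standard appeals to renewal theory.
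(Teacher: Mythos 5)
Your proposal is correct and follows essentially the same route as the paper: identify the final configuration of spreaders with a discrete renewal process with inter-arrival law $q_k=(1-\alpha_{k-1})\prod_{i=0}^{k-2}\alpha_i$ (the paper's Lemma \ref{lemma:key2}), and then invoke the standard renewal SLLN and CLT for the counting function $N(n)$. The only cosmetic difference is that you establish the renewal structure via the auxiliary ``distance to the last spreader'' Markov chain and the strong Markov property, whereas the paper computes $\mathbb{P}(\zeta_n=1\mid \zeta_0^{n-1})=1-\alpha_{\ell(\zeta_0^{n-1})}$ directly and reads off the i.i.d.\ block structure; your chain is exactly the house-of-cards chain the paper uses in its proof for the firework process.
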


In particular, observe that according to Theorems \ref{theo:revfirework} and \ref{theo:Nn}, if the $\alpha_k$'s satisfy at the same time $\prod_k\alpha_k=0$ and $\mu=\infty$ (for instance, if they are as in items (iii) and (iv) of Proposition \ref{coro1}), then the information  reaches infinitely many individuals, but the final proportion of informed individuals is zero.

\begin{ex}\label{ex:3}
Theorem 4.1 of \cite{bertacchi/zucca/2013} follows  from Theorem \ref{theo:revfirework} by considering $R_k$ defined by \eqref{eq:R}. However, the distribution of $N$ and the limit theorems for $N(n)$ are a novelty. 
\end{ex}

\begin{ex}
Consider a model in which, if the nearest spreader on the left of site $i$ is at distance $k$ and $R_i\ge k$, then the individual at site $i$ believes the information with probability 
 $p_{k}$ where $p_{k}$ is a non-increasing sequence. Some examples:
\begin{itemize}
\item assuming $p_{k}=1$ for any $k$, we retrieve  the homogeneous case considered in \cite{junior/machado/zuluaga/2011},
\item assuming $p_{k}=\epsilon$ for any $k$, we obtain a model in which each individual is ``susceptible'' in that it believes the spreader within the radius on its left with a fixed probability $\epsilon$,
\item assuming $p_{k}\searrow0$, we convey the idea that the individual believes the nearest informed individual   in the radius on its left with a probability which decreases according to its distance. 
\end{itemize}
For this model, let ${\bf \bar{R}}=(\bar{R}_{i})_{i\in\mathbb{Z}}$ be the i.i.d. sequence of random radius, and let
$P(\bar{R}_{0}=k):=\bar{\lambda}_{k}$ and $\bar{\alpha}_{k}:=P(\bar{R}_{0}\leq k)$. This model is obtained as an example of the RFP, if we consider the i.i.d. sequence $(L_i)_{i\ge1}$ with $P(L_i\leq k)=1-p_k$, $k\ge0$ and we let
\[
R_i=\min\{\bar{R}_i,L_i\}
\]
denote the effective radius corresponding to this notion of  susceptibility. In this case, Theorem \ref{theo:revfirework} holds with
\[
\alpha_k=1-p_k(1-\bar{\alpha}_k).
\]
%
\end{ex}

\section{The discrete time renewal process}\label{sec:hoc}

The proofs of our results will be based on a remarkable relationship between the rumor processes introduced in the preceding section and a specific discrete time renewal process. This relationship  will be made explicit in Sections \ref{proof:1} and \ref{proof:2}. 
The present section is dedicated to define the  renewal process and list some of its properties. 
We will use, on purpose, the same notation as for the statements of the theorems.

Let  $(q_k)_{k\ge1}$ be a probability distribution on $\{1,2,\ldots\}\cup\{\infty\}$ defined by
\[
q_k=(1-\alpha_{k-1})\prod_{i=0}^{k-2}\alpha_i,
\]
and $q_{\infty} = 1 - \sum_{k} q_k$. Observe that, the mean and the variance of $(q_k)_{k\ge1}$ are given by (\ref{eq:mu1}).

Let ${\bf T} = (T_n)_{n\ge1}$ be an i.i.d. sequence of  r.v's, taking values in $\{1,2,\ldots\}\cup\{\infty\}$ with common distribution $(q_k)_{k\ge1}$. We call \emph{discrete (undelayed) renewal process} the process ${\bf Y}=(Y_n)_{n\ge0}$ defined through $Y_0=1$ and, for any $n\ge1$, $Y_n={\bf1}\{T_1+\ldots+T_i=n\,\,\textrm{for some}\,\,i\}$. Observe that $T_n$ is the distance between the $(n-1)^{\textrm{th}}$ and the $n^{\textrm{th}}$ occurrence of $1$ in ${\bf Y}$. As a consequence, $(q_k)_{k\ge1}$ is called the \emph{inter-arrival distribution}. Each occurrence of an $1$ is called a \emph{renewal}. Let $u_n:=\Pr(Y_n=1)$, $n\ge0$, be the corresponding discrete renewal sequence.

It is well-known that the chain ${\bf Y}$ is recurrent if, and only if, $\P(T=\infty)=\prod_{i\ge0}\alpha_i=0$ and,  in the recurrent regime, it is positive recurrent if, and only if, $\mu<\infty$. The number of $1$'s (number of renewals) occurring in ${\bf Y}$ up to time $n$, which we denote by $N(n)$,     satisfies the following  limit theorems \cite[Chapter 7]{Ross/2009}. If $\mu<\infty$, then $\frac{N(n)}{n}\stackrel{a.s.}{\rightarrow} \mu^{-1}$ and additionally, if $0<\sigma^2<\infty$, then
\[
\frac{N(n)-n\mu^{-1}}{\sqrt{n\sigma^2/\mu^3}}\stackrel{\mathcal{D}}{\rightarrow}\mathcal{N}(0,1).
\]

There are no simple explicit expressions for $u_n$, $n\ge1$. The well-known Discrete Renewal Theorem \cite[Chapter 7]{Ross/2009} states that $u_k\rightarrow\mu^{-1}$ and some results give information concerning the rate at which this convergence occurs. For instance, for the case where $\mu=\infty$,  the following proposition is due to \cite{bressaud/fernandez/galves/1999a}. 

\begin{prop}\label{prop:hoc}
When $\mu=\infty$, $u_k$ converges to zero at
\begin{enumerate}[(i)]
\item summable rate, if $\prod_{i\ge0}\alpha_i>0$ (that is, if $1-\alpha_k$ summable);
\item exponentially rate, if $1-\alpha_k$ decreases exponentially to $0$.
\end{enumerate}
\end{prop}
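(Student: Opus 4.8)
The plan is to reduce both items to a \emph{transient} (defective) renewal process, where the decay of $u_k$ is easy to control. The starting observation is that $\prod_{i\ge0}\alpha_i>0$ is equivalent to $\sum_{i\ge0}(1-\alpha_i)<\infty$, and hence to $q_\infty=\P(T=\infty)=\prod_{i\ge0}\alpha_i>0$; in particular it forces $\mu=\E[T]=\infty$, so the hypothesis of the proposition is automatically satisfied in both cases. Thus in case (i), and \emph{a fortiori} in case (ii), the chain ${\bf Y}$ is transient: there are finitely many renewals a.s., and a last one, occurring at some a.s.\ finite random time $L$.

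For item (i) I would argue directly from this structure. The total number of renewals equals $\min\{j\ge1:T_j=\infty\}$, a geometric random variable with success probability $q_\infty$, hence of mean $1/q_\infty$. On the other hand this number is $\sum_{n\ge0}{\bf1}\{Y_n=1\}$, whose expectation is $\sum_{n\ge0}u_n$. Therefore $\sum_{n\ge0}u_n=1/q_\infty<\infty$, which is precisely the claim that $u_k\to0$ at summable rate.

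For item (ii), assume $1-\alpha_k\le C_r r^k$ for some $r\in(0,1)$. From $q_k=(1-\alpha_{k-1})\prod_{i=0}^{k-2}\alpha_i\le 1-\alpha_{k-1}$ we see that $(q_k)_{k\ge1}$ has an exponentially decaying tail. I would bound $u_n\le\P(L\ge n)$ and control the exponential moments of $L$. Conditionally on the number of renewals, $L$ is a sum of i.i.d.\ copies of the inter-arrival time conditioned to be finite; summing the resulting geometric series gives $\E[e^{\theta L}]=q_\infty\bigl(1-\sum_k q_k e^{\theta k}\bigr)^{-1}$, which is finite as soon as $\sum_k q_k e^{\theta k}<1$. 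Since this sum is finite for $e^\theta<1/r$ and equals $1-q_\infty<1$ at $\theta=0$, it is $<1$ for some $\theta>0$; then Markov's inequality yields $u_n\le\P(L\ge n)\le e^{-\theta n}\E[e^{\theta L}]$, i.e.\ exponential decay. (Equivalently, one may use the renewal generating function $U(z)=(1-Q(z))^{-1}$, $Q(z)=\sum_k q_k z^k$: the tail bound makes $Q$ analytic on $\{|z|<1/r\}$ with $|Q(z)|\le\sum_k q_k|z|^k\to 1-q_\infty<1$ as $|z|\downarrow1$, so one may pick $\rho\in(1,1/r)$ with $\sum_k q_k\rho^k<1$, making $U$ analytic on $\{|z|\le\rho\}$, and Cauchy's estimate gives $u_n\le C\rho^{-n}$.)

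The only step that requires any thought is the first one — recognising that $\prod_i\alpha_i>0$ places us in the transient regime — after which item (i) is a one-line expectation computation and item (ii) is routine propagation of a light tail through a geometric sum (or a contour estimate). No appeal to the finer null-recurrent renewal estimates is needed here; those would matter only for the rate of decay of $u_k$ in the remaining case $\prod_i\alpha_i=0$, $\mu=\infty$.
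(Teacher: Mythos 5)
Your argument is correct, but it is not the paper's argument: the paper offers no proof of this proposition at all, simply attributing it to Bressaud--Fern\'andez--Galves (1999), whose general machinery on rates of convergence of renewal sequences also underlies the sharper quantitative bounds of Proposition \ref{prop:expli}. Your route is more elementary and self-contained. The key observation -- that $\prod_{i\ge0}\alpha_i>0$ is exactly $q_\infty>0$ (indeed $\sum_{k=1}^K q_k$ telescopes to $1-\prod_{i=0}^{K-1}\alpha_i$), so both hypotheses place $\mathbf{Y}$ in the transient regime -- reduces (i) to the identity $\sum_{n\ge0}u_n=\E\bigl[\sum_{n\ge0}\mathbf{1}\{Y_n=1\}\bigr]=1/q_\infty<\infty$, since the total number of renewals is geometric; and (ii) to showing that the last renewal time $L$ has a finite exponential moment, which follows from $q_k\le 1-\alpha_{k-1}$, continuity of $\theta\mapsto\sum_k q_k e^{\theta k}$ at $\theta=0$ where its value is $1-q_\infty<1$, and a geometric sum (or, equivalently, analyticity of $U(z)=(1-Q(z))^{-1}$ on a disc of radius $\rho>1$ plus Cauchy's estimate). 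All steps check out, including the subtle point that the hypotheses of both items automatically force $\mu=\infty$. What your approach buys is a two-line, citation-free proof adequate for this proposition; what it does not give is the finer constants and the polynomial/regularly-varying rates of Proposition \ref{prop:expli} (nor anything in the genuinely null-recurrent case $\prod_i\alpha_i=0$, $\mu=\infty$), for which the external renewal-theoretic results remain necessary -- but none of that is claimed here.
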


 The next proposition gives more explicit estimates under more specific assumptions. Items (i) and (iii) are due to \citet[Proposition B.2]{gallo/lerasle/takahashi/2013}, item (ii) is due to \citet[Remark 5]{bressaud/fernandez/galves/1999a} and item (iv) is due to \citet[Theorem 1.1]{garsia/lamperti/1962}.
 \begin{prop}\label{prop:expli}
We have the following explicit upper bounds.
\begin{enumerate}[(i)]
\item  If $1-\alpha_{k}\leq C_{r}r^{k},\,k\ge1$, for some $r\in(0,1)$ and a constant $C_{r}\in(0,\log \frac{1}{r})$ then
\[
u_k\leq \frac1{C_{r}}(e^{C_{r}}r)^{k}.
\]

\item If $\prod_{i\ge0}\alpha_i>0$ and $\sup_{j}\limsup_{k\rightarrow+\infty}(\frac{1-\alpha_j}{1-\alpha_{kj}})^{1/k}\leq1$, then there exists a constant $C>0$ such that, for large $k$, $u_{k}\leq C(1-\alpha_k)$.
\item If $\alpha_k= \frac{r}{k}+s_{k},\,k\ge1$ where $r\in(0,1)$ and $\{s_{n}\}_{n\ge1}$ is a summable sequence, there exists a constant $C>0$ such that
\[
u_{k}\leq C\frac{(\ln k)^{3+r}}{(k)^{2-(1+r)^{2}}}.
\]
\item If $\prod_{i\ge k+1}\alpha_{i}=L(k)k^{-\alpha}$ where $L(k)>0$ and $\frac{L(\lambda k)}{L(k)}\rightarrow1$ for any $\lambda>0$ and $1/2<\alpha<1$, then, there exists $C(\alpha)>0$ such that ,for large $k$,
\[
u_k\sim \frac{C(\alpha)}{k^{1-\alpha}L(k)}.
\]
\end{enumerate}
\end{prop}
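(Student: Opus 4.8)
The plan is to recognize $\mathbf{Y}$ as the renewal chain of a \emph{house-of-cards} (comb) process and then to read each hypothesis simply as a statement about the decay of the one-step rate $1-\alpha_k$ or about the regular variation of the inter-arrival tail, so that the cited renewal estimate applies verbatim. Concretely, consider the Markov chain on $\N$ that from state $j$ moves to $j+1$ with probability $\alpha_j$ and jumps back to $0$ with probability $1-\alpha_j$: its successive return times to $0$ are i.i.d.\ with law $(q_k)_{k\ge1}$, so $u_k=\Pr(Y_k=1)$ is the probability that this chain sits at $0$ at time $k$. Summing the $q_k$'s telescopically,
\begin{equation*}
\Pr(T\ge k+1)=\sum_{j\ge k+1}q_j=\prod_{i=0}^{k-1}\alpha_i,
\end{equation*}
so the inter-arrival tail is the partial product of the $\alpha_i$'s, and since $q_k=(1-\alpha_{k-1})\prod_{i=0}^{k-2}\alpha_i\le 1-\alpha_{k-1}$, all four hypotheses become standard conditions on this law.

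For item~(i), $1-\alpha_k\le C_r r^k$ forces $q_{k}\le C_r r^{k-1}$, i.e.\ an exponentially decaying inter-arrival law (equivalently, an exponentially small continuity rate for the comb chain); this is the regime treated in \citet[Proposition~B.2]{gallo/lerasle/takahashi/2013}, and feeding their estimate with the present constants yields $u_k\le \frac1{C_r}(e^{C_r}r)^k$, the restriction $C_r<\log(1/r)$ being exactly what makes $e^{C_r}r<1$. For item~(ii), $\prod_{i\ge0}\alpha_i>0$ means $\sum_k(1-\alpha_k)<\infty$, hence the chain is transient and $\mu=\infty$ (the partial products $\prod_{i=0}^{j-1}\alpha_i$ do not vanish, so $\sum_j\prod_{i=0}^{j-1}\alpha_i=\infty$); the additional condition $\sup_j\limsup_k\big((1-\alpha_j)/(1-\alpha_{kj})\big)^{1/k}\le1$ is verbatim the regularity hypothesis under which \citet[Remark~5]{bressaud/fernandez/galves/1999a} obtain $u_k\le C(1-\alpha_k)$, so here the proof reduces to quoting that statement.

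Items~(iii) and~(iv) are the polynomially heavy-tailed cases, where one first identifies the order of $\Pr(T>k)=\prod_{i=0}^{k-1}\alpha_i$ and then invokes a strong renewal theorem. In item~(iv) the hypothesis says precisely that this tail is regularly varying of index $-\alpha$ with $1/2<\alpha<1$ — the range in which the strong renewal theorem holds — so \citet[Theorem~1.1]{garsia/lamperti/1962} gives $u_k\sim C(\alpha)\,k^{\alpha-1}/L(k)$. In item~(iii) one checks that the assumed form of $\alpha_k$ makes $\prod_{i}\alpha_i$ behave like $c\,k^{-r}$ up to a constant (the summable perturbation $s_k$ changes only the multiplicative constant, not the order), and then applies the explicit upper bound of \citet[Proposition~B.2]{gallo/lerasle/takahashi/2013} for renewal sequences with such a tail; the exponent $2-(1+r)^2$ and the factor $(\ln k)^{3+r}$ are exactly what that bound produces (and, as one should note, it is informative only when $2-(1+r)^2>0$).

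Beyond this dictionary there is no new idea; the only genuine work is bookkeeping. The step I expect to be most delicate is item~(iii): one must verify carefully that the perturbation $s_k$ does not spoil the regular-variation estimate for $\prod_i\alpha_i$, and then track constants through the Gallo--Lerasle--Takahashi bound so that the stated exponents come out exactly. Items~(i), (ii) and~(iv) are immediate once the comb-chain identification and the telescoping formula for $\Pr(T>k)$ are in hand.
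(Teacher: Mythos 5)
Your proposal is correct and matches the paper's treatment: the paper offers no proof of this proposition beyond attributing items (i) and (iii) to Gallo--Lerasle--Takahashi (Proposition B.2), item (ii) to Bressaud--Fern\'andez--Galves (Remark 5), and item (iv) to Garsia--Lamperti (Theorem 1.1), and your identification of $u_k$ with the return probabilities of the house-of-cards chain together with the telescoping formula $\Pr(T\ge k+1)=\prod_{i=0}^{k-1}\alpha_i$ is exactly the dictionary needed to invoke those results. Your silent corrections of the statement's apparent typos (reading $1-\alpha_k=r/k+s_k$ in (iii) and the partial product $\prod_{i\le k}\alpha_i$ rather than the tail product in (iv)) are the intended readings, as confirmed by the corresponding items of Proposition \ref{coro1}.
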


\section{Proofs}
In this section, we construct the FP and the RFP through a sequence
${\bf U}=(U_n)_{n \in \mathbb{Z}}$ of iid r.v.'s uniformly distributed in $[0,1[$.  Let
    $\mathbb{P}$ denotes the product law of ${\bf U}$.
At each $i\ge1$, $U_i$ is used to specify the random radius
\[
R_{i}:=\sum_{k\ge0}k{\bf 1}\{\alpha_{k-1}\leq U_{i} <\alpha_{k}\}\,\,\,\textrm{where $\alpha_{-1}:=0$}.
\]
We recall that, for the FP process, this is  the radius at which the individual at site $i$ transmits the information on its right and, for the RFP, this is the radius at which the individual at site $i$ takes the information on its left.

\subsection{Firework Process}\label{proof:1}

The  proofs are based on the following lemma.
\begin{lemma}\label{lemma:1}
For any $n\ge0$, we have $P(M>n)={u}_{n+1}$.
\end{lemma}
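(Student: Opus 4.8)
The plan is to prove the equivalent statement that $a_n:=\P(M>n)$, viewed as a sequence in $n$, satisfies the undelayed renewal equation attached to the inter-arrival law $(q_k)$, which then forces $a_n=u_{n+1}$. The first step is a combinatorial rewriting of $\{M>n\}$. Since a newly informed individual instantly becomes a spreader transmitting to its right, the informed set $\bigcup_iA_i$ is, at every stage and in the limit, an interval of $\N$ containing $0$; hence $\{M>n\}$ is the event that site $n+1$ eventually joins it, which happens if and only if every ``cut'' $m$, $0\le m\le n$, is bridged by some spreader on its left, i.e.\ $j+R_j\ge m+1$ for some $j\le m$. Rephrasing ``$j\le m\le j+R_j-1$'' in terms of the random intervals $I_j:=[\,j,\;j+R_j-1\,]$ (empty when $R_j=0$),
\[
\{M>n\}=\Bigl\{\{0,1,\dots,n\}\subseteq\bigcup_{j=0}^{n}I_j\Bigr\},
\]
a one-dimensional covering event.

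The key idea is to read this covering from the far endpoint back towards the origin. As the $R_j$ are i.i.d., the reversed family $(R_n,R_{n-1},\dots,R_0)$ has the same law as $(R_0,\dots,R_n)$, and under the reflection $p\mapsto n-p$ the intervals $I_j$, which start at their index, turn into intervals $J_k=[\,k-R_k+1,\;k\,]$ that \emph{end} at their index, so that $\{M>n\}$ becomes the covering event $\{0,\dots,n\}\subseteq\bigcup_{k=0}^{n}J_k$ in terms of the reversed radii (which I keep calling $R_k$). Let
\[
\nu:=\min\{\,k\in\{0,\dots,n\}:\ R_k\ge k+1\,\}\in\{0,1,\dots,n\}\cup\{\infty\}
\]
be the first reversed site whose interval reaches all the way back to $0$. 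By independence and $\alpha_k=\P(R\le k)$ one gets $\P(\nu=\ell)=\bigl(\prod_{i=0}^{\ell-1}\alpha_i\bigr)(1-\alpha_\ell)=q_{\ell+1}$ for $0\le\ell\le n$, while $\{\nu=\infty\}$ is disjoint from $\{M>n\}$ (then no $J_k$ covers $0$). On $\{\nu=\ell\}$ one checks that $J_\ell\supseteq\{0,\dots,\ell\}$, that every $J_k$ with $k\le\ell$ is contained in $\{0,\dots,\ell\}$ and hence irrelevant for covering $\{\ell+1,\dots,n\}$, and that the event $\{\ell+1,\dots,n\}\subseteq\bigcup_{k=\ell+1}^{n}J_k$ depends only on $R_{\ell+1},\dots,R_n$; by shift-invariance of the i.i.d.\ sequence it is therefore independent of $\{\nu=\ell\}$ and has probability $a_{n-\ell-1}$ (with the convention $a_{-1}:=1$). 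Summing over $\ell$,
\[
a_n=\sum_{\ell=0}^{n}q_{\ell+1}\,a_{n-\ell-1}=\sum_{j=1}^{n+1}q_j\,a_{n-j},\qquad n\ge0,
\]
which together with $a_{-1}=1=u_0$ gives, by induction on $n$, $a_n=u_{n+1}$, since this is precisely the recursion $u_m=\sum_{j=1}^{m}q_j u_{m-j}$ ($m\ge1$, $u_0=1$) read at $m=n+1$.

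The delicate point, and the one I expect to require care, is the conditioning step in the second paragraph: one must argue cleanly that conditioning on $\{\nu=\ell\}$ renders all the ``near'' constraints automatic and leaves the ``far'' ones as a verbatim, \emph{unconditioned} copy of the original covering problem of size $n-\ell-1$. Taking the \emph{first} passage site $\nu$ is exactly what makes $\{\nu=\ell\}$ measurable with respect to $R_0,\dots,R_\ell$ only, hence independent of the remaining radii; this measurability/independence bookkeeping, together with the index shift produced by the reversal, is essentially all that has to be checked, the interval description of $\bigcup_iA_i$, the value of $\P(\nu=\ell)$, and the uniqueness for the renewal equation being routine. Iterating the decomposition moreover makes the announced correspondence explicit: the successive first-passage sites of the reversed radii are i.i.d.\ with law $(q_k)$ and play the role of the inter-arrival times $T_i$ of the renewal process ${\bf Y}$, so that $\{M>n\}$ is precisely the event that ${\bf Y}$ has a renewal at time $n+1$.
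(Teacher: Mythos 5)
Your proof is correct, but it reaches the identity by a genuinely different mechanism than the paper. You recast $\{M>n\}$ as a one-dimensional covering event, reverse the indices so that the random intervals end at their labels, and then perform a first-passage decomposition at the first reversed site $\nu$ whose interval reaches the origin; since $\P(\nu=\ell)=q_{\ell+1}$ and, on $\{\nu=\ell\}$, the remaining constraints form an independent, shifted copy of the covering problem of size $n-\ell-1$, you obtain the renewal equation $a_n=\sum_{j=1}^{n+1}q_j\,a_{n-j}$ and conclude $a_n=u_{n+1}$ by induction, i.e.\ by uniqueness of the solution with $a_{-1}=u_0=1$. The paper keeps the same reversal but instead introduces a coupled family of ``house-of-cards'' Markov chains $H^{(m)}$ with transitions $Q(i,i+1)=\alpha_i$, $Q(i,0)=1-\alpha_i$, all driven by the same uniforms, and uses monotonicity and coalescence at $0$ to identify the event $\{\bar M<-n\}$ \emph{pathwise} with $\{H^{(-n-1)}_0=0\}$, whose probability is $u_{n+1}$ by the standard Markov-chain/renewal correspondence. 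Your route avoids the auxiliary coupling altogether and is more elementary and self-contained, at the cost of an extra appeal to uniqueness for the renewal recursion; the paper's route yields an almost-sure event identification (not merely an equality of probabilities), which is the structural observation reused for Lemma \ref{lemma:key2} and for the remark in the discussion section that the FP and RFP give the same probability of reaching site $n$. Your closing comment that iterating the decomposition exhibits the successive first-passage sites as the inter-arrival times $T_i$ essentially recovers that pathwise picture. One bookkeeping point, which you share with the paper's own proof rather than introduce: reading $\{M>n\}$ as ``site $n+1$ is eventually informed'' treats $M$ as the index of the last informed site (the paper's displayed formula $M=\min\{i: R_j\le i-j,\ j=0,\ldots,i\}$), which differs by one from the cardinality $|\cup_i A_i|$ defined in Section 2; your argument is consistent with the convention actually used in the paper's proof, so no discrepancy arises between the two.
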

\begin{proof}The first key point is to observe that $M$ can be written as follows
\begin{align}
M&=\min\{i\ge0:R_{j}\le i-j\,,\,\,j=0,\ldots,i\}\\
&=\min\{i\ge0:U_{j}< \alpha_{i-j}\,,\,\,j=0,\ldots,i\}.
\end{align}
However,  the proof will be simpler if we work with the \emph{reversed} random variable
\[
\bar{M}:=\max\{i\le0:U_{j}< \alpha_{j-i}\,,\,\,j=i,\ldots,0\},
\]
which satisfies $-\bar{M}\stackrel{\mathcal{D}}{=}M$. Thus, what we have to prove is that  $\mathbb{P}(\bar{M}<-n)={u}_{n+1}$.
The second key point is to observe that this definition of $\bar{M}$ is similar to the definition of $\tau[0]$ considered in \cite{comets/fernandez/ferrari/2002} (see display (4.2) therein). The proof of our lemma would then, follow from a direct analogy with display (5.6) therein. We nevertheless  include all the details here for completeness. 

Let $({\bf H}^{(m)})_{m\in\mathbb{Z}}$ be a family of Markov processes, where the index $m$ indicates where each one  starts, defined recursively  using the single sequence ${\bf U}$ as follows. For any $m\in\Z$, put $H^{(m)}_m=0$ and
\[
H_n^{(m)}=(H_{n-1}^{(m)}+1){\bf1}\{U_n<\alpha_{H_{n-1}^{(m)}}\},\,\,\,n> m.
\]
The corresponding  transition matrix $Q$ is such that
$Q(i,i+1)=\alpha_i$ and $Q(i,0) = 1-\alpha_i$. Since ${\bf H}^{(m)}$ is a Markov chain, it renews at each visit to
$0$ and  the distance
between two successive visits to $0$ has distribution
$q_k=(1-\alpha_{k-1})\prod_{i=0}^{k-2}\alpha_i$, where
$\prod_{i=0}^{k-2}\alpha_i$ means that the chains climbs up from $0$
to $k-1$, and $(1-\alpha_{k-1})$ means that it falls down to
$0$. Consequently, for any $m\in\mathbb{Z}$ and $k\ge1$, we have
$\mathbb{P}(H_{m+k}^{(m)}=0)=u_k$.  

This family of coupled Markov
processes has two important properties.
\begin{enumerate}
\item \emph{Monotonicity}:
\[
H_{n}^{(m)}\ge H_{n}^{(k)}\,,\,\,\forall\,m<k\leq n,
\]
which implies in particular that $H_{n}^{(m)}=0\Rightarrow H_{n}^{(k)}=0$ for all $m<k\leq n$. 
\item \emph{Coalescence at $0$}, that is
\[
H_{n}^{(m)}=0\Rightarrow H_{t}^{(m)}=H_{t}^{(k)}\,,\,\,\forall\,m\leq k\leq n\leq t.
\]
\end{enumerate}
Using these properties, we  obtain the following sequence of equivalences, for any $j\le 0$:
\begin{align*}
\bar{M}<-n&\Leftrightarrow \forall i\in\{-n,\ldots,0\},\,\exists j\in\{i,\ldots,0\}:\,\,U_{j}> \alpha_{i-j}\\
&\Leftrightarrow  \forall i\in\{-n,\ldots,0\},\,\exists j\in\{i,\ldots,0\}:\,\,H^{(i-1)}_{j}=0\\
&\Leftrightarrow \forall i\in\{-n,\ldots,0\},\,\,H^{(i-1)}_{0}=0\\
&\Leftrightarrow H^{(-n-1)}_{0}=0,
\end{align*}
where the first line follows from the definition of $\bar{M}$, the second line follows from the definition of the family of Markov processes, the third line follows from the coalescing property, and the forth line follows from the monotonicity.

We therefore obtained that $\mathbb{P}(\bar{M}<-n)=\mathbb{P}(H^{(-n-1)}_{0}=0)=\mathbb{P}(H^{(0)}_{n+1}=0)$.  Thus $\mathbb{P}(\bar{M}<-n)=u_{n+1}$. 
\end{proof}

 Theorem \ref{theo:firework} and Proposition \ref{fire:Mdistr} follow directly
 from Lemma \ref{lemma:1}, the fact that $u_k \rightarrow \mu^{-1}$
 and Proposition \ref{prop:hoc}. Proposition
 \ref{coro1} follows from Proposition \ref{prop:expli}
 by simple calculations.

\subsection{Reversed Firework Process}\label{proof:2}

Recall the definition of the sequence of the sets $B_n$, $n\ge1$. For any
$t,n\ge1$, let $\zeta_n(t):={\bf 1}\{n\in \cup_{j\leq t}B_j\}$ and
observe that $\zeta_{n}(t)$ is non-decreasing in $t$ for each fixed
$n\ge1$.  It follows that, by monotonicity, when $t$ goes to infinity
the sequence of processes $({\boldsymbol \zeta}(t))_{t\ge1}$
converges weakly to the process ${\boldsymbol \zeta}$ introduced in
Section \ref{sec:rfp}.

%

The proofs of the results are based on the following lemma.

\begin{lemma}\label{lemma:key2} ${\boldsymbol \zeta}\stackrel{\mathcal{D}}{=}{\bf Y}$.
\end{lemma}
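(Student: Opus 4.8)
The plan is to realize the process ${\boldsymbol \zeta}$ explicitly as the zero-set indicator of the Markov chain ${\bf H}^{(0)}$ built in the proof of Lemma~\ref{lemma:1}, and then invoke the renewal structure of that chain already recorded there. Write $S:=\cup_{i\ge0}B_i$ for the final set of spreaders, so that $\zeta_n={\bf 1}\{n\in S\}$ for every $n\ge0$; since $0\in S$ we have $\zeta_0=1=Y_0$. The first step is to unwind the recursive definition of $(B_n)_{n\ge0}$ into a fixed-point statement: $0\in S$, and for $n\ge1$ one has $n\in S$ if and only if $S\cap\{n-R_n,\ldots,n-1\}\neq\emptyset$; equivalently, $n\in S$ iff the largest spreader strictly to the left of $n$ lies within distance $R_n$ of $n$. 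In particular, membership of $n$ in $S$ is a function of $R_1,\ldots,R_n$ only, so $S$ may be scanned from left to right.

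Next, set $G_0:=0$ and, for $n\ge1$, $G_n:=n-\max\{j\in S:j\le n\}$, the distance from site $n$ to the nearest spreader weakly on its left; this is well defined because $0\in S$, and $\zeta_n={\bf 1}\{G_n=0\}$. The key step is to verify the recursion
\[
G_n=(G_{n-1}+1)\,{\bf 1}\{U_n<\alpha_{G_{n-1}}\},\qquad n\ge1.
\]
Indeed, if $G_{n-1}=d$ then the nearest spreader $\le n-1$ sits at $n-1-d$ while $n-d,\ldots,n-1\notin S$, so by the fixed-point description $n\in S$ iff $R_n\ge d+1$, i.e.\ iff $U_n\ge\alpha_d$, which gives $G_n=0$ and matches the right-hand side; otherwise $R_n\le d$, i.e.\ $U_n<\alpha_d$, the nearest left spreader of $n$ is still $n-1-d$, so $G_n=d+1$, again matching. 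This is exactly the recursion defining ${\bf H}^{(0)}$ (with the same initial value $H^{(0)}_0=0=G_0$), hence $G_n=H^{(0)}_n$ and $\zeta_n={\bf 1}\{H^{(0)}_n=0\}$ for all $n\ge0$.

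Finally, as noted in the proof of Lemma~\ref{lemma:1}, ${\bf H}^{(0)}$ starts at $0$ and renews at each visit to $0$, the successive inter-visit times being i.i.d.\ with law $(q_k)_{k\ge1}$; equivalently, $({\bf 1}\{H^{(0)}_n=0\})_{n\ge0}$ is a discrete undelayed renewal process with inter-arrival distribution $(q_k)_{k\ge1}$, which is precisely the law of ${\bf Y}$. Therefore ${\boldsymbol \zeta}\stackrel{\mathcal{D}}{=}{\bf Y}$; in fact the argument yields an explicit coupling of the two processes on the family ${\bf U}$.

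I expect the main obstacle to be the first step: rigorously passing from the sequential definition of the sets $B_n$ to the fixed-point/left-to-right description of $S$, and carrying out the short case analysis for the recursion of $G$ — together with bookkeeping the possibility that $S$ is finite, in which case $G$ eventually never returns to $0$, consistently with ${\bf H}^{(0)}$ being transient precisely when $\prod_{k\ge0}\alpha_k>0$.
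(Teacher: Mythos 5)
Your proof is correct, and it reaches the conclusion by a somewhat different route than the paper. Both arguments rest on the same core observation — that whether site $n$ becomes a spreader depends only on $R_n$ and the distance from $n$ to the nearest spreader on its left (your $G_{n-1}$ is exactly the paper's $\ell(\zeta_0^{n-1})$, and your condition $R_n\ge G_{n-1}+1$ matches the paper's computation, which yields the probability $1-\alpha_{\ell}$). The difference is in how the renewal structure is then extracted. The paper stays at the distributional level: it computes $\mathbb{P}(\zeta_n=1\mid\zeta_0^{n-1})$, observes that this conditional law depends only on the gap since the last $1$, and identifies $\boldsymbol\zeta$ as a concatenation of i.i.d.\ blocks $0^{K-1}1$ with $P(K=k)=q_k$. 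You instead establish the pathwise identity $\zeta_n={\bf 1}\{H^{(0)}_n=0\}$ by checking that your $G_n$ obeys the defining recursion of the house-of-cards chain ${\bf H}^{(0)}$ from the proof of Lemma~\ref{lemma:1}, and then import the renewal structure of that chain already recorded there. Your version buys an explicit almost-sure coupling of $\boldsymbol\zeta$ with ${\bf Y}$ on the same sequence ${\bf U}$ (rather than only equality in distribution), and it makes transparent the remark in the paper's discussion section that the FP and RFP assign the same probability to the information reaching site $n$, since both are now expressed through the same family of chains. The step you flag as the main obstacle — passing from the staged definition of the $B_n$ to the fixed-point description of $S=\cup_iB_i$ — is indeed needed but is routine: if $n\in B_m$ the witnessing $j\in B_{m-1}$ cannot equal $n$ because of the set difference in the definition, giving one direction, and the other direction is immediate since spreaders persist; your left-to-right induction then makes $G_n$ well defined as a function of $U_1,\ldots,U_n$.
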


\begin{proof}[Proof of Lemma \ref{lemma:key2}]
For any sequence $a_{m}^{n}\in \{0,1\}^{n-m+1}$, $-\infty\leq m\leq n<+\infty$, we define
\[
\ell(a_{m}^{n}):=\inf\{i\ge0: \,a_{n-i}=1\},
\]
which is the number of zeros after the last occurrence of $1$ in $a_{m}^{n}$. We use the convention that $\ell(a_{m}^{n})=\infty$ when $a_{i}=0$ for $i=m,\ldots,n$. 
We have
\begin{align}\label{eq:conta}
\{\zeta_{n}=1\}&=\bigcup_{t\ge1}\{\zeta_{n}(t)=1\}=\bigcup_{t\ge1}\{R_{n}\ge\ell(\zeta_{0}^{n-1}(t))\}=\{R_{n}\ge\ell(\zeta_{0}^{n-1})\}
\end{align}
where we used the fact that $\zeta_{i}(t)$ is non-decreasing in $t$ in the first and in the last equalities. Observe that since $\zeta_{0}=1$, we always have $\ell(\zeta_{0}^{n-1})\leq n-1$.  Since $U_{n}$ is  independent of $\mathcal{F}(U_{1}^{n-1})$ with respect to which $\zeta_{0}^{n-1}$ is measurable, it follows that
\begin{align*}
\mathbb{P}(\zeta_{n}=1|\zeta_{0}^{n-1}=a_{0}^{n-1})&=\mathbb{P}(R_{n}\ge\ell(a_{0}^{n-1}))=1-\alpha_{\ell(a_{0}^{n-1})}.
\end{align*}
In other words, in ${\boldsymbol \zeta}$, the conditional probabilities with respect to the ``past'' only depend on the distance until the last occurrence of an $1$ (that is, the nearest occurrence of an $1$ on the left).  It follows that ${\boldsymbol \zeta}$ is an $1$ (at site $0$) followed by a concatenation of i.i.d. blocks of random length $K$ of the form $0^{K-1}1$. In other words, it is a renewal process with inter-arrival distribution $P(K=k)$, $k\ge1$. Moreover, observe that $P(K=k)=(1-\alpha_{k-1})\prod_{i=0}^{k-2}\alpha_i$, which directly follows from 
\begin{align*}
\mathbb{P}(\zeta_{1}^{k}=0^{k-1}1|\zeta_{0}=1)&=\prod_{i=1}^{k}P(\zeta_i=0|\zeta_{0}^{i-1}=10^{i-1})\times \mathbb{P}(\zeta_k=1|\zeta_{0}^{k-1}=10^{k-1}).
\end{align*}
In other words, ${\boldsymbol\zeta}$ is a renewal process with the same inter-arrival distribution as ${\bf Y}$, and thus, they have the same distribution as claimed.
\end{proof}
The proof of Theorem \ref{theo:Nn} and of most of the statements of Theorem \ref{theo:revfirework} follow directly from Lemma \ref{lemma:key2} and the results of Section \ref{sec:hoc} concerning ${\bf Y}$. 

The only missing  statement of Theorem \ref{theo:revfirework}  is that  $N\sim\textrm{\textrm{Geom}}(r)$ when $r:=\prod_k\alpha_k>0$. But this can be seen from the fact that  ${\bf Y}$ renews at each visit to $1$, and that at each visit, it has probability $r$ of never coming back to $1$.

\section{Discussion and possible extensions} 

Here we list some interesting observation and possible extensions that are under consideration in works in progress. 
\begin{enumerate}
\item Consider the FP and the RFP running with the same $\alpha_k$'s. An interesting observation that may not be obvious at first glance is that the probability that the information reaches the individual at site $n$ is equal in both processes. This is clear from the proofs (Lemmas \ref{lemma:1} and \ref{lemma:key2}).
\item A first natural extension is to consider the models where the individuals propagate or take  the information on both sides (with same radii).
\begin{itemize}
\item Obviously, for the FP, this change only makes sense if individuals are disposed on $\Z$, since on $\N$ all the results are the same. 
\item For the RFP, the question still makes sense on $\N$. In fact, if the radius goes on both sides in the RFP on $\N$, conditions for survival are the same as in the original RFP, but results concerning the proportion of informed individuals will change.
\end{itemize} 
\item Example \ref{ex:geometric} considers the case where the individuals are disposed according to an i.i.d. process (at each site, 
 the probability that there is an individual is $\epsilon$, independently of the other individuals). We see that the independence was crucial, as it allows us to compare each model with the original model with a new sequence of i.i.d. \emph{effective radius}.
 \begin{itemize}
 \item 
\cite{athreya/roy/sarkar/2004} studied a case similar to Example \ref{ex:geometric}, but where  
 the sequence of individuals are disposed according to a Markov process. This case  is not covered by Theorem \ref{theo:firework}.
 As they only obtain sufficient conditions for survival or not of the rumor, it is natural to wonder whether the results that we obtain in the i.i.d. case are also valid in the Markovian case.  
  \item A further  natural extension would be to consider the case where the individuals are disposed according to an arbitrary renewal process. A similar extension could be studied for the RFP as well. 
 \end{itemize}
 \end{enumerate}

\section*{Acknowledgements}
This work was partially funded by CNPq grants 479313/2012-1, 302755/2010-1 and FAPESP grant 2013/03898-8. The authors thank ICMC/USP and IMECC/UNICAMP for their hospitality.

\bibliographystyle{jtbnew}
\bibliography{sandrobibli}

\end{document}